\documentclass[a4paper,10pt]{article}

\input prepictex
\input pictex
\input postpictex

\usepackage{amsmath, amssymb, amsthm}
\usepackage{enumerate}
\date{}

\newtheorem{defin}{Definition}

\newtheorem{thm}{Theorem}
\newtheorem{cor}{Corollary}

\newtheorem{lemma}{Lemma}
\newtheorem{example}{Example}

\title{On the dimension of the minimal vertex cover semigroup ring of an unmixed bipartite graph}
\author{C. Bertone\thanks{{\em email} cristina.bertone@unito.it} \and V. Micale\thanks{{\em email} vmicale@dmi.unict.it}}

\begin{document}
\newcommand{\trdeg}{\mathrm {trdeg}\,}
\newcommand{\rank}{\mathrm {rank}\,}
\maketitle

\footnotetext{{\em Keywords:} Unmixed graph, bipartite graph, vertex cover algebra}

\begin{abstract}
In a paper in 2008, Herzog, Hibi and Ohsugi introduced and studied the semigroup ring associated to the set of minimal vertex covers of an unmixed bipartite graph. In this paper we relate the dimension of this semigroup ring to the rank of the Boolean lattice associated to the graph.
\medskip\noindent
\end{abstract}

\section{Introduction}

Let $G$ be a finite graph without loops, multiple edges and isolated vertices and let $\mathcal M(G)$ be the set of minimal vertex covers of $G$. In \cite[Section 3]{hho} the authors introduce and study the semigroup ring associated to the minimal vertex covers of an unmixed and bipartite graph $G$.

In this paper we relate the dimension of this semigroup ring to the rank of the Boolean lattice associated to $G$.

In Section 2, we recall the concept of an unmixed bipartite graph $G$ and we give some preliminaries about
the Boolean lattice associated to $G$. In particular, we cha\-rac\-te\-ri\-ze those sublattices of the Boolean lattice which are associated to $G$ (cf. Theorem \ref{1}). Then, in the particular case of bipartite graphs, we concentrate on the concept of vertex cover algebra.

In Section 3 we define the semigroup ring associated to the
minimal vertex covers of an unmixed and bipartite graph $G$ and we
prove that its dimension equals the rank of $\mathcal L_G $ plus
one (cf. Theorem \ref{th}). As a particular case of this result we
get that the dimension of the semigroup ring associated to the
minimal vertex covers of bipartite and Cohen-Macaulay graphs on
$2n$ vertices is equal to $n+1$ (cf. Corollary \ref{co}).

\section{Preliminaries}

Throughout this paper, graphs are assumed to be finite, loopless, without multiple edges and isolated vertices. We denote by $V(G)$ the set of vertices of $G$ and by $E(G)$ the set of edges of $G$.

\begin{defin}
For a graph $G$, a subset $C$ of the set of vertices $V(G)$ is called a \textbf{vertex cover} for $G$ if every edge of $E(G)$ is incident to at least one vertex from $C$.
$C$ is a \textbf{minimal} vertex cover if for any $C'\subsetneq C$, $C'$ is not a vertex cover for $G$.
\end{defin}

\noindent Let $\mathcal M(G)$ denote the set of minimal vertex covers of $G$. In general, the minimal vertex covers of a graph do not have the same cardinality.

\begin{example}
Let $G$ be the graph with $V(G)=\{1,2,3,4,5\}$ and $E(G)=\{\{1,2\},\\
\{2,3\},\{3,4\},\{1,4\},\{4,5\}\}$. Then $\mathcal M(G)=\left\{\{2,4\},\left\{1,3,5\right\}\right\}$.
\end{example}

\begin{defin}
A graph $G$ is \textbf{unmixed} if all the elements of $\mathcal M(G)$ have the same cardinality.
\end{defin}

\begin{example}\label{primoesempio}
The graph $G$ with $V(G)=\{1,2,3,4\}$ and $E(G)=\{\{1,2\},\{2,3\},\\
\{3,4\},\{1,4\}\}$ is unmixed as $\mathcal M(G)=\left\{\{2,4\},\left\{1,3\right\}\right\}$.
\end{example}

\begin{defin}
A graph $G$ is \textbf{bipartite} if its set of vertices $V(G)$ can be divided in two disjoint subsets
$U$ and $V$ such that, for all $l\in E(G)$, we have $|l\cap U|=1=|l\cap V|$.
\end{defin}

In what follows $G$ will be assumed to be bipartite and unmixed with respect to the partition $V(G)=U\cup V$ of its vertices, where $U=\{x_1,\dots,x_m\}$ and $V=\{y_1,\dots,y_n\}$.

Since $G$ is unmixed and $U$ and $V$ are both minimal vertex cover for $G$, then $n=m$.\\
 Furthermore, let $U'\subseteq U$ and $N(U')$ be the set of those vertices $y_j\in V$ for which there exist a vertex $x_i\in U'$ such that $\{x_i,y_j\}\in E(G)$; then (cf.\cite[p. 300]{hh}),
since $(U\setminus U')\cup N(U')$ is a vertex cover of $G$  for all subset $U'$ of $U$ and since $G$ is unmixed, it follows that $|U'|\le |N(U')|$ for all subset $U'$ of $U$. Thus, the marriage theorem enable us to assume that  $\{x_i,y_i\}\in E(G)$ for $i=1,\dots,n$.\\
We can also assume that each minimal vertex cover of $G$ is of the form
$$\{x_{i_1} , \dots, x_{i_s} , y_{i_{s+1}},\dots, y_{i_n}\}$$
where $\left\{i_1,\dots,i_n\right\}=[n]=\{1,\dots, n\}$.

For a minimal vertex cover $C = \{x_{i_1} , \dots, x_{i_s} , y_{i_{s+1}},\dots, y_{i_n}\}$ of $G$, we set $C' = \{x_{i_1} ,\dots , x_{i_s}\}$. Let $\mathcal L_n$ denote the Boolean lattice of all the subset of $\{x_1,\dots, x_n\}$ and let
$\mathcal L_G = \{C'\ |\ C \text{ is a minimal vertex cover of G }\}$. One easily checks this is a sublattice of $\mathcal L_n$. Since $\mathcal L_n$ is a distributive lattice, any sublattice is distributive as well.

Actually, there is a one to one correspondence between the graphs we are studying and the sublattices of $\mathcal L_n$ containing $\emptyset$ and $\{x_1,\dots,x_n\}$:

\begin{thm}\cite[Theorem 1.2]{hho}\label{1}
Let $\mathcal L$ be a subset of $\mathcal L_n$. Then there exists a (unique) unmixed
bipartite graph $G$ on $\{x_1,\dots , x_n\}\cup \{y_1,\dots , y_n\}$ such that $\mathcal L = \mathcal L_G$ if and only if $\emptyset$
and $\{x_1,\dots, x_n\}$ belong to $\mathcal L$ and $\mathcal L$ is a sublattice of $\mathcal L_n$.
\end{thm}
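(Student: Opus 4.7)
The ``only if'' direction is essentially recorded in the paragraph preceding the theorem: $\mathcal L_G$ is noted there to be a sublattice of $\mathcal L_n$, and the two minimal vertex covers $\{x_1,\dots,x_n\}$ and $\{y_1,\dots,y_n\}$ (both minimal since the marriage arrangement $\{x_i,y_i\}\in E(G)$ prevents deleting any single vertex) contribute $\{x_1,\dots,x_n\}$ and $\emptyset$ to $\mathcal L_G$.

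For the ``if'' direction I would build $G$ from $\mathcal L$ by declaring
\[
\{x_i,y_j\}\in E(G)\quad\Longleftrightarrow\quad\text{for every }T\in\mathcal L,\ i\in T\text{ or }j\notin T.
\]
Taking $i=j$ gives $\{x_i,y_i\}\in E(G)$, so $G$ is bipartite with no isolated vertices. For $S\subseteq[n]$ write $C_S=\{x_i:i\in S\}\cup\{y_j:j\notin S\}$. If $S\in\mathcal L$, specializing the definition of $E(G)$ to $T=S$ shows that $C_S$ covers every edge, while removing $x_i$ (for $i\in S$) uncovers $\{x_i,y_i\}$ and removing $y_j$ (for $j\notin S$) uncovers $\{x_j,y_j\}$; hence each such $C_S$ is a minimal vertex cover of cardinality $n$.

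The technical core, and the step I expect to be the main obstacle, is the converse: every minimal vertex cover $C$ of $G$ has the form $C=C_A$ with $A\in\mathcal L$. I would proceed in two stages. Let $A=\{i:x_i\in C\}$ and $B=\{j:y_j\in C\}$; the edges $\{x_i,y_i\}$ force $A\cup B=[n]$. To rule out $A\cap B\neq\emptyset$, suppose $i\in A\cap B$. Minimality of $C$ at $x_i$ yields an edge $\{x_i,y_j\}\in E(G)$ with $j\notin B$, and at $y_i$ an edge $\{x_{i'},y_i\}\in E(G)$ with $i'\notin A$. A short case split on whether $i\in T$ for an arbitrary $T\in\mathcal L$ combines the two edge conditions to produce the condition defining $\{x_{i'},y_j\}\in E(G)$ (if $i\in T$ then $i'\in T$ by the second edge condition; if $i\notin T$ then $j\notin T$ by the first). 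But $x_{i'},y_j\notin C$, contradicting that $C$ is a vertex cover. Hence $C=C_A$ with $B=[n]\setminus A$. To see $A\in\mathcal L$, for each $j\in A$ let $T_j$ be the smallest element of $\mathcal L$ containing $j$ (which exists since $[n]\in\mathcal L$ and $\mathcal L$ is closed under intersection). If some $i\in T_j\setminus A$, then every $T\in\mathcal L$ with $j\in T$ contains $T_j$ and hence $i$, so $\{x_i,y_j\}\in E(G)$; yet $x_i,y_j\notin C$, again contradicting the cover property. Thus $T_j\subseteq A$ for each $j\in A$, so $A=\bigcup_{j\in A}T_j\in\mathcal L$ by closure under finite unions (with the degenerate case $A=\emptyset\in\mathcal L$ by hypothesis).

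This simultaneously establishes $\mathcal L_G=\mathcal L$ and that every minimal vertex cover has size $n$, so $G$ is unmixed. Uniqueness follows from the general observation that in any graph $\{u,v\}\in E(G)$ iff every minimal vertex cover contains $u$ or $v$: if $\{u,v\}$ were a non-edge, $V(G)\setminus\{u,v\}$ would still be a vertex cover and hence contain a minimal one avoiding both. Consequently $\mathcal L_G$ determines $E(G)$.
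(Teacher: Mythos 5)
The paper cites this result from \cite{hho} without reproducing a proof, so there is no in-paper argument to compare against. Your proof is correct and complete on its own terms: the edge rule ``$\{x_i,y_j\}\in E(G)$ iff every $T\in\mathcal L$ containing $j$ also contains $i$'' is the right construction; the case split showing $A\cap B=\emptyset$ for a minimal cover, the minimal-element argument $T_j\subseteq A$ showing $A\in\mathcal L$, and the observation that the set of minimal vertex covers determines the edge set (hence uniqueness) all check out.
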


\subsection{Cohen-Macaulay bipartite graphs}

Let $A$ be the polynomial ring $K[z_1,\dots,z_N]$ over a field $K$. To any graph $G$ on vertex set $[N]$, let $I(G)$ be the ideal of $A$, called the \textit{edge ideal} of $G$, generated by the quadratic monomials $z_iz_j$ such that $\{i,j\}\in E(G)$.

\begin{defin}
A graph $G$ is \textbf{Cohen-Macaulay} if the quotient ring $A/I(G)$ is Cohen-Macaulay.
\end{defin}

\noindent Let, as before, $\mathcal L_n$ denote the Boolean sublattice on  $\{x_1, \dots , x_n\}$.

 \begin{defin}
 The \textbf{rank} of a sublattice $\mathcal L$ of $\mathcal L_n$, $\rank\mathcal L$, is the non-negative integer $l$ where $l+1$ is the maximal cardinality of a chain of $\mathcal L$.
 A sublattice $\mathcal L$ of $\mathcal L_n$ is called \textbf{full} if $\rank\mathcal L=n$.
 \end{defin}

\begin{thm}\cite[Theorem 2.2]{hho}\label{4}
 A subset $\mathcal L$ of $\mathcal L_n$ is a full sublattice of $\mathcal L_n$ if and only if there exists
a Cohen-Macaulay bipartite graph $G$ on $\{x_1,\dots, x_n\}\cup \{y_1, \dots , y_n\}$ with $\mathcal L = \mathcal L_G$.
 \end{thm}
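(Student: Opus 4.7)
The plan is to bridge the algebraic condition (Cohen-Macaulayness of $A/I(G)$) and the combinatorial condition ($\rank \mathcal{L}_G = n$) via Birkhoff's correspondence between finite distributive lattices and finite posets. Recall that any finite distributive lattice $\mathcal{L}$ with minimum $\emptyset$ and maximum $\hat{1}$ is isomorphic to $J(P)$, the lattice of order ideals of the poset $P$ of its join-irreducible elements; moreover $\rank \mathcal{L} = |P|$, since a maximal chain in $J(P)$ corresponds to a linear extension of $P$ (added one element at a time). In particular, $\mathcal{L}_G$ being full is equivalent to the associated poset having exactly $n$ elements.

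For the direction ($\Leftarrow$), suppose $G$ is a Cohen-Macaulay bipartite graph. Cohen-Macaulayness implies that $A/I(G)$ is unmixed, hence $G$ itself is unmixed, so by Theorem \ref{1}, $\mathcal{L}_G$ is a sublattice of $\mathcal{L}_n$ containing $\emptyset$ and $\{x_1,\dots,x_n\}$. I would then invoke the poset characterization of Cohen-Macaulay bipartite graphs (a separate result of Herzog-Hibi): up to relabeling, every such graph is the ``comparability'' bipartite graph of a poset $Q$ on $[n]$, namely $\{x_i,y_j\}\in E(G)$ iff $i\le_Q j$. Under this description the minimal vertex covers of $G$ correspond bijectively to the order ideals of $Q$, so $\mathcal{L}_G\cong J(Q)$ and hence $\rank\mathcal{L}_G=|Q|=n$.

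For the converse ($\Rightarrow$), suppose $\mathcal{L}$ is a full sublattice of $\mathcal{L}_n$. A maximal chain $\emptyset=F_0\subsetneq F_1\subsetneq\cdots\subsetneq F_n=\{x_1,\dots,x_n\}$ has $|F_i|=i$ and thus singles out one vertex at each step. Let $Q$ be the poset on $[n]$ whose elements are the join-irreducibles of $\mathcal{L}$ partially ordered by inclusion; then $|Q|=n$ since $\rank\mathcal{L}=n$, and $\mathcal{L}\cong J(Q)$ by Birkhoff. Define the bipartite graph $G$ on $\{x_1,\dots,x_n\}\cup\{y_1,\dots,y_n\}$ with $\{x_i,y_j\}\in E(G)$ iff $i\le_Q j$. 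By the uniqueness statement in Theorem \ref{1}, the resulting $\mathcal{L}_G$ coincides with $\mathcal{L}$, and the Herzog-Hibi poset characterization implies $G$ is Cohen-Macaulay.

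The main obstacle is the reliance on the poset characterization of Cohen-Macaulay bipartite graphs, a non-trivial result in combinatorial commutative algebra. A self-contained proof avoiding this characterization would have to analyze directly when $A/I(G)$ is Cohen-Macaulay, typically either through Alexander duality (showing the Alexander dual of the independence complex is shellable, or vertex-decomposable) or by exhibiting a concrete cellular resolution of $A/I(G)$ indexed by the lattice $\mathcal{L}_G$. Either route involves substantial technical work beyond the purely combinatorial manipulation of $\mathcal{L}_G$.
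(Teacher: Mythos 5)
The paper offers no proof of this statement to compare against: it is imported verbatim as \cite[Theorem 2.2]{hho}. Your outline is, in substance, the standard argument by which that result is established in the literature: Birkhoff's theorem converts ``full sublattice of $\mathcal L_n$'' into ``$J(Q)$ for a poset $Q$ with $|Q|=n$'', and the Herzog--Hibi structure theorem for Cohen--Macaulay bipartite graphs (reference \cite{hh} of this paper) converts ``Cohen--Macaulay bipartite'' into ``comparability graph $G(Q)$ of a poset $Q$ on $[n]$, whose minimal vertex covers are the order ideals of $Q$''. You are right, and commendably explicit, that the structure theorem carries essentially all of the algebraic content; as written, your text is a reduction to that unproved external theorem rather than a self-contained proof, which is acceptable here only because the paper itself treats the whole statement as a black box.

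There is one step that needs repair even granting the structure theorem. In the direction ($\Rightarrow$) you form the abstract poset $Q$ of join-irreducibles, build $G$ from it, and then invoke the uniqueness clause of Theorem \ref{1} to conclude $\mathcal L_G=\mathcal L$. Uniqueness only says that a given sublattice determines at most one graph; it cannot upgrade the abstract isomorphism $\mathcal L\cong J(Q)$ to the equality $\mathcal L_G=\mathcal L$ \emph{inside} $\mathcal L_n$, because that equality depends on how the $n$ join-irreducibles are matched with the ground set $\{x_1,\dots,x_n\}$. The fix is to realize $Q$ concretely on $[n]$: for each $i$ set $q_i=\bigcap\{F\in\mathcal L : x_i\in F\}$, which lies in $\mathcal L$ since $\mathcal L$ is a finite sublattice containing $[n]$. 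Every $F\in\mathcal L$ equals $\bigcup_{x_i\in F}q_i$, so the join-irreducibles of $\mathcal L$ all occur among the $q_i$; since $\rank\mathcal L=n$ forces exactly $n$ join-irreducibles, the map $i\mapsto q_i$ is injective and its image is precisely the set of join-irreducibles. Declaring $i\le_Q j$ if and only if $x_i\in q_j$ then makes the elements of $\mathcal L$ exactly the order ideals of $([n],\le_Q)$, and the graph with edges $\{x_i,y_j\}$ for $i\le_Q j$ satisfies $\mathcal L_G=\mathcal L$ on the nose (and is Cohen--Macaulay by the structure theorem, after passing to a linear extension to meet its labeling hypothesis). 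With that identification supplied, your argument is complete modulo the cited characterization.
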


\subsection{Vertex cover algebra}

Let $G$ be a bipartite and unmixed graphs on the set of vertices $\{x_1,\dots,x_n\}\cup\{y_1,\dots,y_n\}$ and with minimal vertex cover $C = \{x_{i_1} , \dots, x_{i_s} , y_{i_{s+1}},\dots, y_{i_n}\}$. It is useful  to notice that $x_i\in C$ if and only if $y_i\notin C$.

 We can identify  $C$ with the $(0,1)$-vector, $b_C\in \mathbb N^{2n}$ such that

 \[
 b_C(j)=
 \begin{cases}
 1 \text{ if }  1\leq j\leq n \text{ and } x_j\in C\\
 1 \text{ if }  n+1\leq j\leq 2n \text{ and } y_{j-n}\in C\\
 0 \text{ otherwise}
 \end{cases}
 \]
where $b_C(j)$ denotes the $j$-th coordinate of the vector $b_C$.

 In this way, we can associate to each minimal vertex cover $C$ of $G$ a squarefree monomial in the polynomial ring $S=K[x_1,\dots,x_n,y_1,\dots,y_n]$ with $\deg x_i=\deg y_i=1$; in fact, we first associate to $C$ its vector $b_C$ and then we consider the monomial $u_{C}=x_1^{b_C(1)}\cdots x_n^{b_C(n)}y_1^{b_C(n+1)}\cdots y_n^{b_C(2n)}$.

\begin{defin} The \textbf{vertex cover algebra} of the bipartite graph $G$ is the subalgebra $A(G)$ of $S[t]$ generated, over $S$, by the monomials $u_{C}t$ for every minimal vertex cover $C$ of $G$, that is $A(G)=S[u_{C}t,\ C\in\mathcal M(G)]$.
\end{defin}

\noindent By \cite[Theorem 4.2 and Corollary 4.4]{hht}, we have, in particular, that $A(G)$ is a finitely generated, graded, normal, Gorenstein $S$-algebra.

Moreover, in \cite[Theorem 5.1]{hht}, the authors show that $A(G)$
is generated in degree $\leq 2$ and that it is standard graded.

 \section{The dimension of $\overline{A(G)}$}

 We now introduce the object of our study in this paper.

Let $\mathfrak m$ be the maximal graded ideal of $S$. For a bipartite unmixed graph $G$, we consider the standard graded $K$-algebra
 \[
 \overline{A(G)}:=A(G)/\mathfrak m A(G)\cong K[u_{C}t,\ C\in\mathcal M(G)]\cong K[u_{C},\ C\in\mathcal M(G)].
 \]

 \noindent Hence $ \overline{A(G)}$ is the semigroup ring generated by all monomials $u_{C}$ such that $C\in\mathcal M(G)$. This object has been introduced and studied in \cite[Section 3]{hho}, where the authors proved, in particular, that $\overline{A(G)}$ is a normal and Koszul semigroup ring (cf. \cite[Corollary 3.2]{hho}).

The aim of the paper is to relate the dimension of $\overline{A(G)}$ to $\rank\mathcal L_G$ (cf. Theorem \ref {th}).

Let $d=|\mathcal M(G)|$ and $B_G$ be the $d\times 2n$ matrix whose
rows are exactly the vectors $b_C$.
 Since $C_1=\{x_1,\dots,x_n\}$ and $C_2=\{y_1,\dots,y_n\}$ are always in $\mathcal M(G)$, we can assume that the first and the last rows
 of $B_G$ are $b_{C_1}$ and $b_{C_2}$ respectively. Finally, let $\widetilde{b_C}$ be the $n$-vector containing only the first $n$ entries of $b_C$ and let $\widetilde{B_G}$ be the $d\times n$ matrix whose rows are the vectors $\widetilde{b_C}$.

 \begin{lemma}\label{2matrici}
 $\rank B_G=\rank \widetilde{B_G}+1$.
 \end{lemma}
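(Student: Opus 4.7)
The plan is to exploit the duality between $x_i$ and $y_i$ in any minimal vertex cover of a bipartite unmixed graph: since $x_i\in C$ if and only if $y_i\notin C$, we have $b_C(i)+b_C(n+i)=1$ for every $C\in\mathcal M(G)$ and every $i\in[n]$. Translated to the matrix $B_G$, this says that for each $i\in\{1,\dots,n\}$ the sum of the $i$-th column and the $(n+i)$-th column equals the all-ones column $\mathbf{1}\in\mathbb Q^d$.

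With this in hand I would proceed by elementary column operations on $B_G$. For each $i=1,\dots,n$, replace the $(n+i)$-th column by itself plus the $i$-th column; this transforms $B_G$ into a matrix whose first $n$ columns still form $\widetilde{B_G}$ and whose last $n$ columns are all equal to $\mathbf{1}$. Since column operations do not change the rank and since repeated identical columns contribute at most one to the rank, this gives
\[
\rank B_G \;=\; \rank\bigl[\,\widetilde{B_G}\,\big|\,\mathbf{1}\,\bigr].
\]

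It remains to argue that $\mathbf{1}$ is not in the column space of $\widetilde{B_G}$, so that appending it strictly increases the rank by one. Here I would invoke the presence of the minimal vertex cover $C_2=\{y_1,\dots,y_n\}$: its associated row in $\widetilde{B_G}$ is identically zero. Consequently, any $\mathbb Q$-linear combination of the columns of $\widetilde{B_G}$ has a zero entry in the position indexed by $C_2$, and therefore cannot equal $\mathbf{1}$. Hence $\rank\bigl[\,\widetilde{B_G}\,|\,\mathbf{1}\,\bigr]=\rank\widetilde{B_G}+1$ and the lemma follows.

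There is essentially no obstacle to this argument; the only subtlety is making sure one really uses the existence of $C_2$ (equivalently, of $C_1$, which gives the analogous observation on the other block of columns). The entire proof is a short bookkeeping argument once the identity $b_C(i)+b_C(n+i)=1$ is isolated.
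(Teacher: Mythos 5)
Your proof is correct and follows essentially the same route as the paper: both rest on the identity that the $(n+i)$-th column plus the $i$-th column is the all-ones vector $\mathbf{1}$, reduce to computing $\rank\bigl[\,\widetilde{B_G}\,|\,\mathbf{1}\,\bigr]$, and then use the zero row coming from the cover $\{y_1,\dots,y_n\}$ to see that $\mathbf{1}$ lies outside the column space of $\widetilde{B_G}$. No issues.
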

 \begin{proof}
  Let $C_1,\dots,C_{2n}$ be the  column vectors of $B_G$ (note that the columns of $\widetilde{B_G}$ are exactly $C_1,\dots, C_n$) and
 let $\widetilde C$ be the column vector with $d$ entries each equals to 1.
 Since $C_{n+j}=\widetilde C-C_j$ for every $j=1,\dots,n$, then we have that
 \[
 \langle C_1,\dots,C_n,\widetilde C\rangle_K=\langle C_1,\dots,C_n,C_{n+1},\dots,C_n\rangle_K.
 \]
as $K$-vector spaces.

Finally, since the last entry in each column $C_1,\dots,C_n$ is $0$, it follows that $\widetilde C\notin\langle C_1,\dots,C_n\rangle_K$, that is
 \[
 \dim_K\langle C_1,\dots,C_n,\widetilde C\rangle_K=\dim\,\langle C_1,\dots,C_n\rangle_K+1.
 \]
  \end{proof}

 \begin{lemma}\label{reticolomatrice}
  $\rank \widetilde{B_G}=\rank \mathcal L_G$
  \end{lemma}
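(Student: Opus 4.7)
The plan is to select a maximal chain in $\mathcal{L}_G$ and show that its length controls both $\rank \widetilde{B_G}$ and $\rank \mathcal{L}_G$ simultaneously. By Theorem~\ref{1}, both $\emptyset$ and $\{x_1,\ldots,x_n\}$ belong to $\mathcal{L}_G$, so I may fix a maximal chain
\[
\emptyset = A_0 \subsetneq A_1 \subsetneq \cdots \subsetneq A_l = \{x_1,\ldots,x_n\}
\]
in $\mathcal{L}_G$, which gives $l = \rank \mathcal{L}_G$ by definition. For $A \in \mathcal{L}_G$ let $\chi_A \in \{0,1\}^n$ denote its characteristic vector, so that the rows of $\widetilde{B_G}$ are precisely the vectors $\chi_A$ with $A \in \mathcal{L}_G$. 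Set $D_i := A_i \setminus A_{i-1}$; the sets $D_1,\ldots,D_l$ are nonempty and partition $\{x_1,\ldots,x_n\}$.

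For the inequality $\rank \widetilde{B_G} \ge l$, I look at the $l$ row-differences $\chi_{A_i} - \chi_{A_{i-1}} = \chi_{D_i}$, $i = 1, \ldots, l$. They lie in the row space of $\widetilde{B_G}$, have pairwise disjoint supports, and are therefore $K$-linearly independent, yielding the bound.

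For the reverse inequality the plan is to show that every $A \in \mathcal{L}_G$ is a union of blocks $D_i$; once this is known,
\[
\chi_A = \sum_{i \,:\, D_i \subseteq A}\bigl(\chi_{A_i} - \chi_{A_{i-1}}\bigr),
\]
so every row of $\widetilde{B_G}$ lies in the $l$-dimensional subspace spanned by the $\chi_{D_i}$, forcing $\rank \widetilde{B_G} \le l$. Suppose for contradiction that some $A \in \mathcal{L}_G$ satisfies $\emptyset \ne A \cap D_i \ne D_i$ for some $i$. Then
\[
A' := A_{i-1} \cup (A \cap A_i)
\]
lies in $\mathcal{L}_G$, since it is obtained from $A_{i-1}, A, A_i$ by the joins and meets of the sublattice. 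Moreover $A_{i-1} \subsetneq A' \subsetneq A_i$: a vertex of $A \cap D_i$ witnesses the first strict inclusion, and a vertex of $D_i \setminus A$ witnesses the second. Inserting $A'$ strictly refines our chain, contradicting its maximality.

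The only delicate step, and the main obstacle, is the block-splitting contradiction in the third paragraph: one must simultaneously verify both strict inclusions $A_{i-1} \subsetneq A' \subsetneq A_i$ and ensure that $A'$ really is an element of the sublattice $\mathcal{L}_G$. Once this is settled, the rest of the lemma collapses to the elementary observation that indicator vectors of a set partition are linearly independent, applied in both directions.
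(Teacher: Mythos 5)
Your proof is correct, and while it follows the paper's broad outline (fix a maximal chain through $\emptyset$ and $\{x_1,\dots,x_n\}$, then bound the rank from both sides), the mechanism you use for each inequality is genuinely different. For the lower bound the two arguments are essentially equivalent: the paper shows directly that the top $m$ chain vectors are independent using their nested supports and strictly decreasing weights, whereas you pass to the difference vectors $\chi_{D_i}$, whose pairwise disjoint supports make independence immediate; since the bottom of the chain is the zero vector, the two families span the same subspace of the row space. The real divergence is in the upper bound. The paper invokes distributivity locally: for an element $v$ off the chain forming a diamond with consecutive chain elements $v_i, v_{i+1}, v_{i+2}$ it writes $v = v_i - v_{i+1} + v_{i+2}$, and then asserts that ``repeating this in each analogous situation'' reaches every element of $\mathcal L_G$ --- a step left informal, which would need an induction over the lattice to make fully precise. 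You instead prove the clean structural statement that every $A \in \mathcal L_G$ is a union of the blocks $D_i = A_i \setminus A_{i-1}$, using only closure of the sublattice under union and intersection together with maximality of the chain (the element $A' = A_{i-1} \cup (A \cap A_i)$ would otherwise refine it strictly, and your verification of both strict inclusions is sound). This places every row in the span of the $l$ vectors $\chi_{D_i}$ in one stroke, with no case analysis and no appeal to distributivity beyond what the Boolean sublattice structure already provides; it is arguably the tighter of the two arguments.
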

  \begin{proof}
 Let $\rank\mathcal L_G=m$ and consider a chain of maximal length $m+1$ in the sublattice $\mathcal L_G$. We note that, by Theorem \ref{1}, $\emptyset$ and $[n]$ are in this chain. Each element of this maximal chain corresponds to a row of the matrix $\widetilde{B_G}$. Let denote with $v_1,\dots, v_{m+1}$ the row vectors associated to this maximal chain, where $v_1$ is the vector associated to the element at the top of the chain, $v_2$ is the vector associated to the element of the chain just below the top, and so on for the remaining vectors $v_3,\dots,v_{m+1}$. With this notation we have that $v_1$ is the vector with all $1$'s and $v_{m+1}$ is the vector with all $0$'s.
We note that if $i>j$, then the numbers of $1$'s in $v_i$ is strictly less than the number of $1$'s in $v_j$ and that if $0$ is the $l$-th coordinate of $v_i$, then $0$ is the $l$-th coordinate of $v_j$. This two facts imply that $v_1,\dots, v_{m}$ are linearly indipendent. So we have $\rank\widetilde{B_G}\geq m$.

 In order to prove that equality holds, we show that all the other rows of $\widetilde{B_G}$ are linear combination of the $m$ rows associated to $v_1,\dots, v_m$. With an abuse of notation, we now identify the elements of the lattice $\mathcal L_G$ with their associated vectors.
 Since $\mathcal L_G$ is a lattice containing $[n]$ and $\emptyset$, following the maximal chain in the lattice containing the vectors $v_1,\dots,v_m$, we have, at a certain height, the situation depicted in the picture

 \vspace{.5 cm}

 $$  \beginpicture
  \setcoordinatesystem units <2pt,2pt>
\setplotarea x from 0 to 50, y from 0 to 50
\setlinear
\plot 25 50  0 25 /
\plot 0 25  25 0 /
\setdashes
\plot 25 0  50 25 /
\plot 50 25 25 50 /
   \put{$v_i$} at 25 55
  \put{$v_{i+1}$} at -5 25
    \put{$v_{i+2}$} at 25  -5
   \put{$v$} at 55 25
\endpicture
$$

\vspace{.5cm}

\noindent where $v_i, v_{i+1}, v_{i+2}$ are in the maximal chain.

But $\mathcal L_G$ is a distributive lattice and, in terms of the vectors, this means that we can obtain $v$ from the other vectors in the picture: in fact (vectorially)
\[
v=v_{i}-v_{i+1}+v_{i+2}.
\]

Repeating this in each analogous situation, we have that all the
possible vectors representing elements of the lattice which are not
in the chosen maximal chain, can be obtained by a linear
combination of the vectors $v_1,\dots, v_m$. In terms of the
matrix $\widetilde{B_G}$, this means that $\rank\widetilde{B_G}\le
m$.

 \end{proof}

 \begin{thm}\label{th}
 Let $G$ be an unmixed, bipartite graph on $2n$ vertices with no isolated vertices and let $\mathcal L_G$ be the associated sublattice of $\mathcal L_{n}$. Then
 \[
 \dim\, \overline{A(G)}=\rank \mathcal L_G+1.
 \]
  \end{thm}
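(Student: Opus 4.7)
The plan is to observe that the Krull dimension of the semigroup ring $\overline{A(G)}=K[u_C : C\in\mathcal M(G)]$ is controlled by the $\mathbb Z$-lattice spanned by the exponent vectors $b_C$, and then to invoke the two preceding lemmas to identify this rank with $\rank\mathcal L_G+1$.

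First I would recall the standard dimension formula for affine semigroup rings generated by monomials: if $u_{C_1},\dots,u_{C_d}$ are monomials in $S=K[x_1,\dots,x_n,y_1,\dots,y_n]$ with exponent vectors $b_{C_1},\dots,b_{C_d}\in \mathbb N^{2n}$, then the Krull dimension of $K[u_{C_1},\dots,u_{C_d}]$ equals the transcendence degree of its fraction field over $K$, which in turn equals the dimension of the $\mathbb Q$-vector space spanned by the $b_{C_i}$'s. Applied to our setting this gives
\[
\dim \overline{A(G)}=\rank B_G.
\]

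The remainder of the proof is pure assembly. Lemma \ref{2matrici} yields $\rank B_G=\rank\widetilde{B_G}+1$, and Lemma \ref{reticolomatrice} yields $\rank\widetilde{B_G}=\rank\mathcal L_G$. Chaining these two equalities with the displayed formula gives
\[
\dim \overline{A(G)}=\rank\mathcal L_G+1,
\]
as required.

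The only ingredient beyond the two lemmas is the classical dimension formula for toric/semigroup rings, which either follows from a direct transcendence-degree computation (the $u_C$'s lie in the Laurent polynomial ring on $x_1,\dots,x_n,y_1,\dots,y_n$, whose monomial subrings have dimension equal to the rank of the exponent lattice) or may simply be cited. Thus the genuine content of the theorem is entirely combinatorial and was already carried out in Lemmas \ref{2matrici} and \ref{reticolomatrice}; the main conceptual point to be made explicit in writing up the theorem is the passage from the algebraic invariant $\dim \overline{A(G)}$ to the linear-algebraic invariant $\rank B_G$, which is handled by this standard fact.
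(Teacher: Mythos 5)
Your proof is correct and follows the paper's argument exactly: the paper likewise cites the standard fact that $\dim\,\overline{A(G)}=\rank B_G$ (via \cite[Proposition 7.1.17]{v1}, which is the transcendence-degree computation you sketch) and then chains Lemmas \ref{2matrici} and \ref{reticolomatrice}. No gaps.
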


 \begin{proof}
By  \cite[Proposition 7.1.17]{v1}, we have that $\dim\, \overline{A(G)}=\rank\, B_G$. By Lemmas \ref{2matrici} and \ref{reticolomatrice}, we get the proof.
 \end{proof}

\begin{cor}\label{co}
Let $G$ be a Cohen-Macaulay bipartite graph on $2n$ vertices. Then
\[
\dim\, \overline{A(G)}=n+1
\]
\end{cor}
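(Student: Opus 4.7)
The plan is to derive this corollary as an immediate consequence of Theorem \ref{th} combined with the characterization of Cohen-Macaulay bipartite graphs given in Theorem \ref{4}. Since Theorem \ref{th} already identifies $\dim \overline{A(G)}$ with $\rank \mathcal{L}_G + 1$, the only thing left to establish is that for a Cohen-Macaulay bipartite $G$ on $2n$ vertices, the associated sublattice $\mathcal{L}_G$ has rank exactly $n$.

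First I would invoke Theorem \ref{4}: the hypothesis that $G$ is Cohen-Macaulay bipartite on $\{x_1,\dots,x_n\}\cup\{y_1,\dots,y_n\}$ tells us that $\mathcal{L}_G$ is a \emph{full} sublattice of $\mathcal{L}_n$. By the definition of full given earlier in the paper, this means precisely $\rank \mathcal{L}_G = n$. Then a direct application of Theorem \ref{th} yields $\dim \overline{A(G)} = \rank \mathcal{L}_G + 1 = n+1$, which is the claim.

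There is essentially no obstacle to overcome: the corollary is a one-line consequence of the two cited results, and the only small point to verify is that the hypothesis (Cohen-Macaulay bipartite on $2n$ vertices, with no isolated vertices so that Theorem \ref{th} applies) matches exactly the hypothesis of Theorem \ref{4}, which it does. Thus the proof reduces to stringing together the two equalities $\rank \mathcal{L}_G = n$ and $\dim \overline{A(G)} = \rank \mathcal{L}_G + 1$.
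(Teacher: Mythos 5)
Your proof is correct and is essentially the paper's own argument: invoke Theorem \ref{4} to conclude that $\mathcal L_G$ is full, i.e.\ $\rank\mathcal L_G=n$, and then apply Theorem \ref{th}. The only detail the paper makes explicit that you pass over is that a Cohen--Macaulay graph is unmixed (cited from Villarreal), which is what guarantees that $\mathcal L_G$ is defined and that the hypotheses of Theorem \ref{th} are met.
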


\begin{proof}
By \cite[Proposition 6.1.21]{v1}, $G$ is unmixed. Furthermore, by
Theorem \ref{4}, Cohen-Macaulay graphs correspond to full
sublattices. Hence, by Theorem \ref{th}, we get the thesis.
\end{proof}

\bigskip
         {\large
  {\bf Acknowledgements}}

\medskip

The authors wish to thank Professors J\"urgen Herzog and Volkmar
Welker for the valuable conversations concerning this paper. We
also wish to thanks Professor Alfio Ragusa and all other
organizers of PRAGMATIC 2008 for the opportunity to partecipate and for the pleasant atmosphere they
provided during the summer school.

\bigskip

\end{document}